\newcommand{\set}[1]{\left\{#1\right\}}
\numberwithin{equation}{section}
\newtheorem{theorem}{Theorem}[section]
\theoremstyle{remark}
 \newtheorem{remark}[theorem]{Remark}
\newtheorem{definition}[theorem]{Definition}
\begin{document}
 
\title{Estimates for the Initial Coefficients of Bi-univalent Functions}
\author[S. Sivaprasad Kumar]{S. Sivaprasad Kumar}

\address{Department of Applied Mathematics,
Delhi Technological University, Delhi---110042, India}
\email{spkumar@dce.ac.in}

\author[V. Kumar]{Virendra Kumar}
\address{Department of Applied Mathematics,
Delhi Technological University, Delhi---110042, India}
\email{vktmaths@yahoo.in}

\author{V. Ravichandran}
\address{Department of Mathematics, University of Delhi, Delhi---110007, India}
\email{vravi@maths.du.ac.in}

\begin{abstract}A bi-univalent function is a univalent function defined on the  unit disk with its inverse also univalent on the unit disk.  In the present investigation, estimates for the initial coefficients are obtained  for   bi-univalent functions belonging to certain classes defined by subordination and  relevant connections with earlier results are pointed out.
\end{abstract}

\keywords{Univalent functions, bi-univalent functions, coefficient estimate, subordination.}

\subjclass[2010]{30C45, 30C80}

\maketitle

\section{Introduction}
Let $\mathcal{A}$ be the class of analytic functions defined  on the open unit disk $\mathbb{D}:=\{z\in\mathbb{C}:|z|<1\}$ and normalized by the conditions $f(0)=0$ and $f'(0)=1$. A function $f\in \mathcal{A}$  has Taylor's series expansion of the form \begin{equation}\label{eq1}
f(z)=z+\sum^{\infty}_{n=2}a_nz^n.\end{equation}
The class of all  univalent functions in the open unit disk $\mathbb{D}$ of the form (\ref{eq1})
 is denoted by $\mathcal{S}$. Determination of the bounds for  the coefficients $a_n$  is an important problem in
 geometric function theory as they give information about the geometric properties of these functions.
  For example, the bound for the second coefficient $a_2$ of  functions in $\mathcal{S}$ gives the growth
  and distortion bounds as well as covering theorems. Some coefficient related problems were investigated
  recently in \cite{filomat,ali11,chen,cho10,liu11,samni}. 

Since univalent functions are one-to-one, they are invertible but their inverse functions need not be defined on the entire unit disk $\mathbb{D}$. In fact, the famous Koebe one-quarter theorem ensures that the image of the unit disk $\mathbb{D}$ under every function $f\in\mathcal{S}$ contains a disk of radius $1/4$. Thus, inverse of every function $f\in\mathcal{S}$ is defined on a disk, which contains the disk $|z|<1/4$. It can also be easily verified that
\begin{equation}\label{ei}F(w):=f^{-1}(w)=w-a_2w^2+(2a_2^2-a_3)w^3-(5a_2^2-5a_2a_3+a_4)w^4+
\cdots\end{equation}
in some disk of radius at least 1/4.  A function $f\in\mathcal{A}$ is called \emph{bi-univalent}  in $\mathbb{D}$ if both $f$ and $f^{-1}$ are univalent in $\mathbb{D}$. In 1967, Lewin \cite{lewin} introduced the class $\sigma$ of bi-univalent analytic functions and showed that the second coefficient of every $f\in\sigma$  satisfy the inequality $|a_2|\leq1.51$. Let $\sigma_1$ be the class of all functions $f=\phi\circ\psi^{-1}$ where $\phi,\psi$ map $\mathbb{D}$ onto a domain containing $ \mathbb{D}$ and $\phi'(0)=\psi'(0)$. In 1969, Suffridge \cite{suff69} gave a function in $\sigma_1\subset\sigma$, satisfying $a_2=4/3$ and conjectured that $|a_2|\leq 4/3$ for all functions in $\sigma$. In 1969, Netanyahu \cite{neta} proved this conjecture for the subclass $\sigma_1$. Later in 1981, Styer and Wright \cite{sty81} disproved the conjecture of Suffridge \cite{suff69} by showing $a_2>4/3$ for some function in $\sigma$. Also see \cite{bhs} for an example to show $\sigma\neq \sigma_1$. For results on bi-univalent polynomial, see \cite{smith1,ked88}. In 1967, Brannan \cite{branan} conjectured that $|a_2|\leq \sqrt{2}$ for $f\in \sigma$. In 1985, Kedzierawski {\cite [Theorem 2]{ked89}} proved this conjecture for a special case when the function  $f $  and $f^{-1}$  are starlike functions. In $1985$, Tan \cite{tan} obtained the bound for $a_2$ namely $|a_2|\leq 1.485$ which is the best known estimate for functions in the class $\sigma$. For some open problems and survey, see \cite{good,smith2}. In $1985$, Kedzierawski \cite{ked89} proved the following:
 $$|a_2|\leq\left\{
   \begin{array}{ll}
     1.5894, & \hbox{$f\in \mathcal{S},\; f^{-1}\in \mathcal{S}$;} \\
     \sqrt2, & \hbox{$f\in \mathcal{S}^*,\; f^{-1}\in \mathcal{S}^*$;} \\
     1.507, & \hbox{$f\in \mathcal{S}^*,\; f^{-1}\in \mathcal{S}$;} \\
     1.224, & \hbox{$f\in \mathcal{K},\; f^{-1}\in \mathcal{S}$,}
   \end{array}
 \right.$$
where $\mathcal{S}^*$ and $\mathcal{K}$ denote the well-known classes of starlike and convex functions in $\mathcal{S}.$

 Let us recall now various definitions required in sequel. An analytic  function $f$ is  \emph{subordinate} to another analytic function $g$, written $f\prec g$, if there is an analytic function $w$ with $|w(z)|\leq |z|$ such that $f =g\circ w$. If $g$ is univalent, then $f\prec g$ if and only if $f(0)=g(0)$ and $f(\mathbb{D})\subseteq g(\mathbb{D})$. Let $\varphi$ be an analytic univalent function in $\mathbb{D}$ with positive real part  and $\varphi(\mathbb{D})$ be symmetric with respect to the real axis, starlike with respect to $\varphi(0)=1$  and $\varphi'(0)>0.$ Ma and Minda \cite{minda} gave a unified presentation of various subclasses of starlike and convex functions by introducing the classes $\mathcal{S}^*(\varphi)$ and $\mathcal{K}(\varphi)$ of functions $f\in \mathcal{S}$ satisfying $zf'(z)/f(z)\prec\varphi(z)$ and $1+zf''(z)/f'(z)\prec\varphi(z)$ respectively, which includes several well-known classes as special case. For example, when $\varphi(z)=(1+A z)/(1+Bz)\; (-1\leq B<A\leq 1),$ the class $\mathcal{S}^*(\varphi)$ reduces to the class $\mathcal{S}^*[A,B]$ introduced by Janowski \cite{jano}.  For $0\leq\beta<1$, the classes $\mathcal{S}^*(\beta):=\mathcal{S}^*((1+(1-2\beta)z)/(1-z))$
 and $\mathcal{K}(\beta):=\mathcal{K}((1+(1-2\beta)z)/(1-z))$ are starlike and convex functions of order $\beta.$
Further let $\mathcal{S}^*:=\mathcal{S}^*(0)$ and $\mathcal{K}:=\mathcal{K}(0)$ are the classes of starlike and convex functions respectively. The class of strongly starlike functions $\mathcal{S}^*_\alpha:=\mathcal{S}^*(((1+z)/(1-z))^\alpha)$ of order $\alpha,\; 0<\alpha\leq1.$ Denote by $\mathcal{R}(\varphi)$ the class of all functions satisfying $f'(z)\prec\varphi(z)$ and let $\mathcal{R}(\beta):=\mathcal{R}((1+(1-2\beta)z)/(1-z))$ and $\mathcal{R}:=\mathcal{R}(0)$.

For $ 0\leq\beta<1$, a function $f\in \sigma$ is in the class $\mathcal{S}^*_{\sigma}(\beta)$ of \emph{bi-starlike  function of order $\beta$}, or $\mathcal{K}_{\sigma}(\beta)$ of\emph{ bi-convex function of order $\beta$} if both $f$ and $f^{-1}$ are respectively starlike or convex functions of order $\beta$. For $0<\alpha\leq 1$,  the function $f\in\sigma$ is   \emph{strongly bi-starlike function of order $\alpha$} if both the functions $f$ and $f^{-1}$ are strongly starlike functions of order $\alpha$. The class of all such functions is denoted by $\mathcal{S}^*_{\sigma,\alpha}$. These classes were introduced by  Brannan and Taha~\cite{branan1} in 1985 (see also \cite{branan0}). They obtained estimates on the initial coefficients $a_2$ and $a_3$ for functions in these classes. Recently, Ali \emph{et al.}\ \cite{ravi} extended the results of   Brannan and Taha \cite{branan1}  by generalizing their classes using subordination. For some related results, see \cite{srivastava,frasin,xu}. For the various applications of subordination one can refer to \cite{filomat,ali11,cho10,liu11,samni} and the references cited therein.

Motivated by  Ali \emph{et al.}\ \cite{ravi} in this paper estimates for the initial coefficient $a_2$ of bi-univalent functions belonging to the  class $\mathcal{R}_\sigma(\lambda, \varphi)$ as well as estimates on $a_2$ and $a_3$ for functions in classes $\mathcal{S^*}_\sigma(\varphi)$ and $K_{\sigma}(\varphi)$, defined later, are obtained. Further work of Kedzierawski \cite{ked89} actuates us to derive the estimates on  initial coefficients $a_2$ and $a_3$ when $f$ is in the some subclass of univalent functions and $f^{-1}$ belongs to some other subclass of univalent functions. Our results generalize several well-known results in \cite{ravi, frasin,ked89,srivastava}, which are pointed out here.

\section{Coefficient estimates}
Throughout this paper, we assume that  $\varphi$ is an analytic function in $\mathbb{D}$ of the form \begin{equation}\label{eb}\varphi(z)=1+B_1z+B_2z^2+B_3z^3+\cdots \;\;\text{with}\;\;B_1>0,\;{\text {and}}\; B_2\;{\text{ is any real number}} .\end{equation}

\begin{definition}Let $\lambda\geq0$.  A function $f\in \sigma$ given by (\ref{eq1}) is  in the class $\mathcal{R}_\sigma(\lambda, \varphi)$,  if it satisfies
\[(1-\lambda)\frac{f(z)}{z}+\lambda f'(z)\prec\varphi(z) \quad \text{ and}\quad  (1-\lambda)\frac{F(w)}{w}+\lambda F'(w)\prec\varphi(w).\] \end{definition}

The class $\mathcal{R}_\sigma(\lambda, \varphi)$ includes many earlier classes, which are mentioned below:
\begin{enumerate}
\item $\mathcal{R}_\sigma(\lambda, (1+(1-2\beta)z)/(1-z))=\mathcal{R}_{\sigma}(\lambda,\beta)\;$  $(\lambda\geq 1;\;0\leq\beta<1)$    \cite[Definition 3.1]{frasin}

\item $\mathcal{R}_\sigma(\lambda, ((1+z)/(1-z))^\alpha)=\mathcal{R}_{\sigma,\alpha}(\lambda)\;\; (\lambda\geq 1;\;0<\alpha\leq1)$    \cite[Definition 2.1]{frasin}

\item $\mathcal{R}_\sigma(1,\varphi)
      =\mathcal{R}_{\sigma}(\varphi)$  \cite[p. 345]{ravi}.
\item $\mathcal{R}_\sigma(1,(1+(1-2\beta)z)/(1-z))
      =\mathcal{R}_\sigma(\beta)\;\; (0\leq\beta<1)$  \cite[Definition 2]{srivastava}
\item  $\mathcal{R}_\sigma(1,((1+z)/(1-z))^\alpha)
      =\mathcal{R}_{\sigma,\alpha}\;\; (0<\alpha\leq1)$  \cite[Definition 1]{srivastava}

\end{enumerate}
 Our first result provides estimate for the coefficient $a_2$ of functions $f\in\mathcal{R}_\sigma(\lambda, \varphi)$.

\begin{theorem}\label{th1} If $f\in\mathcal{R}_\sigma(\lambda, \varphi)$, then
\begin{align}\label{e1}|a_2|&\leq \sqrt\frac{B_1+|B_1-B_2|}{1+2\lambda}
\end{align}
\end{theorem}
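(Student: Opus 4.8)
The plan is to set up the subordination conditions for both $f$ and $F = f^{-1}$, extract the relevant coefficient relations, and then combine them. First I would write the subordination $(1-\lambda)f(z)/z + \lambda f'(z) \prec \varphi(z)$ as an equality: there exists a Schwarz function $u(z) = c_1 z + c_2 z^2 + \cdots$ with $|u(z)| \le |z|$ (so that $|c_1| \le 1$ and $|c_2| \le 1 - |c_1|^2$) such that the left-hand expression equals $\varphi(u(z))$. Comparing the coefficient of $z$ gives a relation of the form $(1+\lambda)a_2 = B_1 c_1$. Similarly, applying the subordination to $F(w)/w$-expression with a second Schwarz function $v(w) = d_1 w + d_2 w^2 + \cdots$, and using the inverse expansion from \eqref{ei} (so the coefficient of $w^2$ in $F$ is $-a_2$), the coefficient of $w$ yields $-(1+\lambda)a_2 = B_1 d_1$.

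Next I would push to the second-order coefficients. The coefficient of $z^2$ in $\varphi(u(z))$ is $B_1 c_2 + B_2 c_1^2$, and matching it against the coefficient of $z^2$ in the left-hand expression (which is $(1+2\lambda)a_3$) gives $(1+2\lambda)a_3 = B_1 c_2 + B_2 c_1^2$. For $F$, the coefficient of $w^3$ in $F$ is $2a_2^2 - a_3$ from \eqref{ei}, so matching the $w^2$ coefficient gives $(1+2\lambda)(2a_2^2 - a_3) = B_1 d_2 + B_2 d_1^2$. From the first-order relations we already have $c_1 = -d_1$, hence $c_1^2 = d_1^2$. Adding the two second-order equations eliminates $a_3$ and produces an expression of the form
\begin{equation}\label{keyrel}
2(1+2\lambda)a_2^2 = B_1(c_2 + d_2) + 2B_2 c_1^2.
\end{equation}

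The final step is to estimate the right side of \eqref{keyrel}. I would substitute $c_1^2 = (1+\lambda)^2 a_2^2 / B_1^2$ from the first-order relation, isolate $a_2^2$, and then bound $|c_2 + d_2|$. The cleanest route is to use $c_1^2 = d_1^2$ together with the Schwarz bounds $|c_2| \le 1 - |c_1|^2$ and $|d_2| \le 1 - |d_1|^2 = 1 - |c_1|^2$, giving $|c_2 + d_2| \le 2(1 - |c_1|^2)$. Writing $t = |c_1|^2 \in [0,1]$ and expressing everything in terms of $t$, the bound on $|a_2|^2$ becomes a function of $t$ that I would maximize over $[0,1]$; the factor $|B_1 - B_2|$ in the stated bound suggests the extremum occurs at an endpoint and that the case analysis on the sign of $B_1 - B_2$ (equivalently, comparing the two competing bounds for $a_2^2$) is what produces the $B_1 + |B_1 - B_2|$ numerator. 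The main obstacle I anticipate is precisely this optimization and sign bookkeeping: one must carefully track whether the dominant contribution comes from the $B_1(c_2+d_2)$ term or the $B_2 c_1^2$ term, and confirm that the worst case collapses to the compact form $\sqrt{(B_1 + |B_1 - B_2|)/(1+2\lambda)}$ rather than a messier piecewise expression.
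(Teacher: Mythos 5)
Your proposal is correct, and its skeleton coincides with the paper's: both express the two subordinations through Schwarz functions, extract the first- and second-order coefficient relations (your $(1+\lambda)a_2=B_1c_1$, $(1+2\lambda)a_3=B_1c_2+B_2c_1^2$ and their counterparts for $F$ are exactly the paper's \eqref{eq12} and \eqref{eq16} after the substitutions $c_1=p_1/2$, $c_2=\tfrac12(p_2-p_1^2/2)$), and add the two second-order relations to eliminate $a_3$; your displayed key relation is literally the paper's \eqref{eq17a}. The only divergence is the final estimation. The paper converts $r,s$ into the Carath\'eodory functions $p=(1+r)/(1-r)$, $q=(1+s)/(1-s)$ and bounds each term independently via $|p_i|\le2$, $|q_i|\le2$, which directly produces the numerator $B_1+|B_1-B_2|$. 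You keep the Schwarz coefficients and invoke the coupled bound $|c_2|\le1-|c_1|^2$. The optimization you flag as the main obstacle is actually trivial: with $t=|c_1|^2\in[0,1]$ your relation gives $(1+2\lambda)|a_2|^2\le B_1(1-t)+|B_2|t$, which is linear in $t$ and maximized at an endpoint, so $(1+2\lambda)|a_2|^2\le\max\{B_1,|B_2|\}$. This does not collapse to the stated numerator; it is strictly sharper in general, and the theorem follows a fortiori since $\max\{B_1,|B_2|\}\le B_1+|B_1-B_2|$ by the triangle inequality (using $B_1>0$). So your route, once the sign bookkeeping is done, proves a slightly stronger inequality than the one claimed, whereas the paper's cruder term-by-term estimate is what yields the exact form $\sqrt{(B_1+|B_1-B_2|)/(1+2\lambda)}$.
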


\begin{proof}Since $f\in\mathcal{R}_\sigma(\lambda, \varphi)$,   there exist two analytic functions $r, s:\mathbb{D}\rightarrow\mathbb{D}$,  with $r(0)=0=s(0)$,  such that
\begin{equation}\label{p1}(1-\lambda)\frac{f(z)}{z}+\lambda f'(z)=\varphi(r(z))\;\text {and}\; (1-\lambda)\frac{F(w)}{w}+\lambda F'(w)=\varphi(s(z)).\end{equation}
Define the functions $p$ and $q$
 by
 {\small\begin{equation}\label{pq}
    p(z)=\frac{1+r(z)}{1-r(z)}= 1+p_1z+p_2z^2+p_3z^3+\cdots \;\;\text {and}\;\; q(z)=\frac{1+s(z)}{1-s(z)}= 1+q_1z+q_2z^2+q_3z^3+\cdots,
 \end{equation}}
 or equivalently,
 \begin{equation}\label{p2}
  r(z)=\frac{p(z)-1}{p(z)+1}=\frac{1}{2}\left(p_1z+\Big(p_2-\frac{p_1^2}{2}\Big)z^2+
  \Big(p_3+\frac{p_1}{2}\big(\frac{p_1^2}{2}-p_2\big)-\frac{p_1p_2}{2}\Big)z^3+\cdots\right)
 \end{equation}
 and
\begin{equation}\label{p3}
  s(z)=\frac{q(z)-1}{q(z)+1}=\frac{1}{2}\left(q_1z+\Big(q_2-\frac{q_1^2}{2}\Big)z^2+ \Big(q_3+\frac{q_1}{2}\big(\frac{q_1^2}{2}-q_2\big)-\frac{q_1q_2}{2}\Big)z^3+\cdots\right).
 \end{equation}
It is clear that $p$ and $q$ are analytic in $\mathbb{D}$ and $p(0)=1=q(0)$. Also $p$ and $q$  have positive real part in $\mathbb{D}$, and hence $|p_i|\leq2$ and  $|q_i|\leq2$. In the view of (\ref{p1}), (\ref{p2}) and (\ref{p3}), clearly
 \begin{equation}\label{p4}(1-\lambda)\frac{f(z)}{z}+\lambda f'(z)=\varphi\left(\frac{p(z)-1}{p(z)+1}\right)\;\;\text {and}\;\; (1-\lambda)\frac{F(w)}{w}+\lambda F'(w)=\varphi\left(\frac{q(w)-1}{q(w)+1}\right).\end{equation}
On expanding (\ref{eb}) using (\ref{p2}) and (\ref{p3}), it is evident that
\begin{multline}\label{p5}
\varphi\left(\frac{p(z)-1}{p(z)+1}\right)= 1+\frac{1}{2}B_1p_1z+\left(\frac{1}{2}B_1\big(p_2-\frac{1}{2}p_1^2\big)
   +\frac{1}{4}B_2p_1^2\right)z^2+\cdots.
\end{multline}
and
\begin{multline}\label{p6}
\varphi\left(\frac{q(w)-1}{q(w)+1}\right)= 1+\frac{1}{2}B_1q_1w+\left(\frac{1}{2}B_1\big(q_2-\frac{1}{2}q_1^2\big)
   +\frac{1}{4}B_2q_1^2\right)w^2+\cdots.
\end{multline}
Since $f\in\sigma$ has the Maclaurin series given by (\ref{eq1}), a computation shows that its inverse $F=f^{-1}$ has the expansion given by (\ref{ei}). It follows from (\ref{p4}), (\ref{p5}) and (\ref{p6}) that
\[(1+\lambda) a_2=\frac{1}{2}B_1p_1,\]
\begin{equation}\label{eq12}
 (1+2\lambda) a_3 =\frac{1}{2}B_1\Big(p_2-\frac{1}{2}p_1^2\Big)+\frac{1}{4}B_2p_1^2,
\end{equation}
\[ -(1+\lambda) a_2=\frac{1}{2}B_1q_1,\]
\begin{equation}\label{eq16}
 (1+2\lambda) (2a_2^2-a_3) =\frac{1}{2}B_1\left(q_2-\frac{1}{2}q_1^2\right)+\frac{1}{4}B_2q_1^2.
\end{equation}
Now  (\ref{eq12}) and (\ref{eq16}) yield
 \begin{equation}\label{eq17a}
 8(1+2\lambda)a_2^2=2(p_2+q_2)B_1+(B_2-B_1)(p_1^2+q_1^2).
\end{equation}
Finally an application of the known results, $|p_i|\leq2$ and  $|q_i|\leq2$
 in (\ref{eq17a}) yields the desired estimate of $a_2$ given by (\ref{e1}).
\end{proof}
\begin{remark} Let $\varphi(z)=(1+(1-2\beta)z)/(1-z),\; 0\leq\beta<1.$ So $B_1=B_2=2(1-\beta).$
 When $\lambda=1$, Theorem \ref{th1} gives the estimate $|a_2|\leq \sqrt {2(1-\beta)/3}$ for functions in the class $\mathcal{R}_\sigma(\beta)$ which coincides with the result \cite[Corollary 2]{xu} of Xu \emph{et al}.
In particular if $\beta=0$, then above estimate becomes $|a_2|\leq \sqrt{2/3}\approx 0.816$ for functions $f\in \mathcal{R}_\sigma(0).$ Since the estimate on $|a_2|$ for $f\in  \mathcal{R}_\sigma(0) $ is improved over the conjectured estimate   $|a_2|\leq\sqrt{2}\approx 1.414$ for $f\in \sigma$, the functions in $ \mathcal{R}_\sigma(0) $ are not the candidate for the sharpness of the estimate in the class $\sigma$.
\end{remark}
\begin{definition}  A function $f\in \sigma$  is  in the class $\mathcal{S^*}_\sigma(\varphi)$,  if it satisfies
 $$\frac{zf'(z)}{f(z)}\prec\varphi(z)\quad \text{and}\quad \frac{wF'(w)}{F(w)}\prec\varphi(w).$$
 \end{definition}
 Note that for a suitable choice of $\varphi$,  the class $\mathcal{S^*}_\sigma(\varphi)$, reduces to the following well-known classes:
 \begin{enumerate}
   \item $\mathcal{S^*}_\sigma((1+(1-2\beta)z)/(1-z))=\mathcal{S^*}_\sigma(\beta) \quad(0\leq\beta<1).$
   \item $\mathcal{S^*}_\sigma(\left((1+z)/(1-z)\right)^\alpha)
    =\mathcal{S}^*_{\sigma,\alpha} \quad(0<\alpha\leq 1).$
 \end{enumerate}
\begin{theorem} \label{th2}If $f\in\mathcal{S^*}_\sigma(\varphi)$, then
$$|a_2|\leq\min\set{\sqrt{B_1+|B_2-B_1|}, \sqrt{ \frac{B_1^2+B_1+|B_2-B_1|}{2}}, \frac{B_1\sqrt{B_1}}{\sqrt{B_1^2+|B_1-B_2|}}}$$ and
$$|a_3|\leq\min\set{ B_1+|B_2-B_1|, \frac{B_1^2+B_1+|B_2-B_1|}{2}, R},$$ where
$$R:= \frac{1}{4}\left(B_1+3B_1\max\set{1;\left|\frac{B_1-4B_2}{3B_1}\right|}\right).$$
\end{theorem}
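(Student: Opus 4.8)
The plan is to mirror the proof of Theorem~\ref{th1}. Since $f\in\mathcal{S^*}_\sigma(\varphi)$, there are Schwarz functions $r,s$ with $zf'(z)/f(z)=\varphi(r(z))$ and $wF'(w)/F(w)=\varphi(s(w))$; introducing the Carath\'eodory functions $p,q$ exactly as in (\ref{pq}), one has $|p_i|,|q_i|\le 2$, and the right-hand sides expand as in (\ref{p5}) and (\ref{p6}). The new input is the left-hand side: dividing the series shows $zf'(z)/f(z)=1+a_2z+(2a_3-a_2^2)z^2+\cdots$, and feeding the coefficients of $F$ from (\ref{ei}) into the same formula gives $wF'(w)/F(w)=1-a_2w+(3a_2^2-2a_3)w^2+\cdots$. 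Comparing with (\ref{p5}) and (\ref{p6}) yields $a_2=\tfrac12B_1p_1$, $-a_2=\tfrac12B_1q_1$, and
\[2a_3-a_2^2=\tfrac12B_1p_2+\tfrac14(B_2-B_1)p_1^2,\qquad 3a_2^2-2a_3=\tfrac12B_1q_2+\tfrac14(B_2-B_1)q_1^2.\]
In particular $p_1=-q_1$, so $p_1^2=q_1^2$.

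Adding and subtracting the two quadratic relations gives the two master identities $a_2^2=\tfrac14B_1(p_2+q_2)+\tfrac14(B_2-B_1)p_1^2$ and $a_3-a_2^2=\tfrac18B_1(p_2-q_2)$. For $|a_2|$ the three estimates arise from three ways of bounding the first identity. A termwise application of $|p_i|,|q_i|\le2$ gives $|a_2|^2\le B_1+|B_2-B_1|$. Adding the linear identity $a_2^2=\tfrac14B_1^2p_1^2$ turns it into $2a_2^2=\tfrac14B_1(p_2+q_2)+\tfrac14(B_1^2+B_2-B_1)p_1^2$, and estimating the $B_1^2$ and $(B_2-B_1)$ contributions separately gives $|a_2|^2\le\tfrac12(B_1^2+B_1+|B_2-B_1|)$. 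Finally, substituting $p_1^2=4a_2^2/B_1^2$ into the first identity and solving the resulting linear equation for $a_2^2$ (then using $|p_2+q_2|\le4$) produces the third bound.

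For $|a_3|$ I plan to substitute the master identities into $a_3=a_2^2+\tfrac18B_1(p_2-q_2)$. Using the additive form of $a_2^2$ gives $a_3=\tfrac38B_1p_2+\tfrac18B_1q_2+\tfrac14(B_2-B_1)p_1^2$, whose termwise estimate produces the first $a_3$ bound; alternatively, using the $f$-relation together with $a_2^2=\tfrac14B_1^2p_1^2$ rewrites $a_3=\tfrac14B_1p_2+\tfrac18(B_1^2+B_2-B_1)p_1^2$, and a split termwise estimate gives the second. The bound $R$ is the genuinely new step: grouping the first representation as $a_3=\tfrac38B_1(p_2-\nu p_1^2)+\tfrac18B_1q_2$ with $\nu=\tfrac{2(B_1-B_2)}{3B_1}$, I invoke the known inequality $|p_2-\nu p_1^2|\le2\max\{1,|2\nu-1|\}$ for functions of positive real part; since $2\nu-1=(B_1-4B_2)/(3B_1)$, this bounds the first part by $\tfrac34B_1\max\{1,|(B_1-4B_2)/(3B_1)|\}$, and adding $|\tfrac18B_1q_2|\le\tfrac14B_1$ gives exactly $R$. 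Taking minima over the three estimates in each case finishes the argument.

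I expect the main obstacle to be the third estimate in each case: recognizing that the $p_2$ and $p_1^2$ terms in $a_3$ form precisely a Fekete--Szeg\H{o} functional, so that the correct $\nu$ and hence the factor $(B_1-4B_2)/(3B_1)$ appear, and organizing the $a_2$ substitution so the solved-for denominator comes out in the stated form. The remaining work is the same bookkeeping with $|p_i|,|q_i|\le2$ already used in Theorem~\ref{th1}.
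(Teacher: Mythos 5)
Your proposal follows essentially the same route as the paper's own proof: the same Schwarz-function and Carath\'eodory-function setup, the same coefficient identities (your two ``master identities'' are the paper's (\ref{eq19}) and (\ref{eq19.2}) after using $p_1^2=q_1^2$), the same three representations of $a_2^2$ and of $a_3$, and the same Keogh--Merkes step with $\nu=\tfrac{2(B_1-B_2)}{3B_1}$ producing $R$. The one wrinkle---that eliminating $p_1^2$ via $p_1^2=4a_2^2/B_1^2$ yields the denominator $|B_1^2+B_1-B_2|$ rather than the stated $B_1^2+|B_1-B_2|$ (these agree only when $B_2\leq B_1$)---is equally present in the paper's own derivation of (\ref{eq19.10}), so it is not a divergence on your part.
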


\begin{proof} Since $f\in \mathcal{S}^*_\sigma(\varphi)$,  there are analytic functions $r, s:\mathbb{D}\rightarrow\mathbb{D}$,  with $r(0)=0=s(0)$,  such that
\begin{equation}\label{p1.1}\frac{zf'(z)}{f(z)}=\varphi(r(z))\;\text {and}\; \frac{wF'(w)}{F(w)} =\varphi(s(z)).\end{equation}
Let $p$ and $q$ be defined as in (\ref{pq}),
then it is clear from (\ref{p1.1}), (\ref{p2}) and (\ref{p3}) that
\begin{equation}\label{p4.1}\frac{zf'(z)}{f(z)}=\varphi\left(\frac{p(z)-1}{p(z)+1}\right)\;\;\text {and}\;\; \frac{wF'(w)}{F(w)}=\varphi\left(\frac{q(z)-1}{q(z)+1}\right).\end{equation}
It follows from (\ref{p4.1}), (\ref{p5}) and (\ref{p6}) that
\begin{equation}\label{eq11.1}
 a_2=\frac{1}{2}B_1p_1,
\end{equation}
\begin{equation}\label{eq12.1}
 2a_3 =\frac{B_1p_1}{2}a_2+\frac{1}{2}B_1\left(p_2-\frac{1}{2}p_1^2\right)+\frac{1}{4}B_2p_1^2,
\end{equation}
\begin{equation}\label{eq15.1}
 -a_2=\frac{1}{2}B_1q_1
\end{equation}
and
\begin{equation}\label{eq16.1}
 4a_2^2-2a_3=-\frac{B_1q_1}{2}a_2+
 \frac{1}{2}B_1\left(q_2-\frac{1}{2}q_1^2\right)+\frac{1}{4}B_2q_1^2.
\end{equation}
The equations (\ref{eq11.1}) and (\ref{eq15.1}) yield
\begin{equation}\label{eq17.1}
 p_1=-q_1,
\end{equation}
\begin{equation}\label{eq18.10}
 8a_2^2=(p_1^2+q_1^2)B_1^2
\end{equation}
and
\begin{equation}\label{eq18.1}
 2a_2=\frac{B_1(p_1-q_1)}{2}.
\end{equation}
From (\ref{eq12.1}), (\ref{eq16.1}) and (\ref{eq18.1}), it follows that
\begin{equation}\label{eq19}
 8a_2^2=2B_1(p_2+q_2)+(B_2-B_1)(p_1^2+q_1^2).
\end{equation}
Further a computation using (\ref{eq12.1}), (\ref{eq16.1}), (\ref{eq11.1}) and (\ref{eq17.1}) gives
\begin{equation}\label{eq19.1}
 16a_2^2=2B_1^2q_1^2+2B_1(p_2+q_2)+(B_2-B_1)(p_1^2+q_1^2).
\end{equation}
Similarly a computation using (\ref{eq12.1}), (\ref{eq16.1}), (\ref{eq18.1}) and (\ref{eq18.10}) yields
\begin{equation}\label{eq19.10}
 4(B_1^2-B_2+B_1)a_2^2=B_1^3(p_2+q_2).
\end{equation}
Now (\ref{eq19}), (\ref{eq19.1}) and (\ref{eq19.10}) yield the desired estimate on $a_2$ as asserted in the theorem. To find estimate for $a_3$ subtract (\ref{eq12.1}) from (\ref{eq16.1}), to get
\begin{equation}\label{eq19.2}
 -4a_3=-4a_2^2+\frac{B_1(q_2-p_2)}{2}.
\end{equation}
 Now a computation using (\ref{eq19.1}) and (\ref{eq19.2}) leads to
\begin{equation}\label{eq19.31}
 16a_3=2B_1^2q_1^2+4B_2p_2+(B_1-B_2)(p_1^2+q_1^2).
\end{equation}
From (\ref{eq11.1}), (\ref{eq12.1}), (\ref{eq15.1}) and (\ref{eq16.1}), it follows that

\begin{eqnarray}\label{eq19.33}
  4a_3 &=& \frac{B_1}{2}(3p_2+q_2)+(B_2-B_1)p_1^2 \\ \label{eq19.3}
   &=& \frac{B_1q_2}{2}+\frac{3B_1}{2}\left(p_2-\frac{2(B_1-B_2)}{3B_1}p_1^2\right).
\end{eqnarray} On applying the result of Keogh and Merkes \cite{koef69}(see also \cite{ravi05}), that is for any complex number $v$, $|p_2-vp_1^2|\leq 2 \max\{1;|2v-1|\}$, along with $|q_2|\leq2$ in (\ref{eq19.3}), we obtain
\begin{equation}\label{eq19.30}
 4|a_3|\leq B_1+3B_1\max\set{1;\left|\frac{B_1-4B_2}{3B_1}\right|}.
\end{equation}
 Now the desired estimate on $a_3$ follows from (\ref{eq19.31}), (\ref{eq19.33}) and (\ref{eq19.30}) at once.\end{proof}
\begin{remark}  If $f\in\mathcal{S^*}_\sigma(\beta)\;\;(0\leq\beta<1)$, then from Theorem~\ref{th2} it is evident that

{\small\begin{equation}\label{1}
|a_2|\leq\min\set{\sqrt{2(1-\beta)}, \sqrt{(1-\beta)(3-2\beta)}}=\left\{
  \begin{array}{ll}
    \sqrt{2(1-\beta)}, & \hbox{$0\leq\beta\leq1/2$;} \\
    \sqrt{(1-\beta)(3-2\beta)}, & \hbox{$1/2\leq\beta<1$.}
  \end{array}
\right.
\end{equation}}
Recall Brannan and Taha's~\cite[Theorem 3.1]{branan0} coefficient estimate, $|a_2|\leq \sqrt{2(1-\beta)}$ for functions $f\in\mathcal{S^*}_\sigma(\beta),$ who claimed that their estimate is better than the estimate $|a_2|\leq 2(1-\beta)$, given by Robertson~\cite{rob}. But their claim is true only when $0\leq\beta\leq 1/2.$ Also it may noted that our estimate for $a_2$ given in (\ref{1}) improves the estimate given by Brannan and Taha~\cite[Theorem 3.1]{branan0}.

Further if we take $\varphi(z)=((1+z)/(1-z))^\alpha,\;0<\alpha\leq1$ in Theorem~\ref{th2}, we have $B_1=2\alpha$ and $B_2=2\alpha^2$. Then we obtain the estimate on $a_2$ for functions $f\in\mathcal{S^*}_{\sigma,\alpha}$ as:
$$|a_2|\leq\min\set{\sqrt{4\alpha-2\alpha^2}, \sqrt{\alpha^2+2\alpha}, \frac{2\alpha}{\sqrt{1+\alpha}}}=\frac{2\alpha}{\sqrt{1+\alpha}}.$$
 Note that Brannan and Taha~\cite[Theorem 2.1]{branan0} gave the same estimate $|a_2|\leq 2\alpha/\sqrt{1+\alpha}$ for functions $f\in\mathcal{S^*}_{\sigma,\alpha}.$
\end{remark}

\begin{definition} A function $f$ given by (\ref{eq1}) is said to be in the class $K_{\sigma}(\varphi)$,
if $f$ and $F$  satisfy the subordinations
\[1+\frac{zf''(z)}{f'(z)}\prec \varphi(z)\;\;{\text {and}}\;\; 1+\frac{wF''(w)}{F'(w)}\prec \varphi(w).\]
\end{definition}
Note that $K_{\sigma}((1+(1-2\beta)z)/(1-z)))=:K_\sigma(\beta)\;\;
    (0\leq\beta<1).$
\begin{theorem}\label{th3} If  $f\in K_{\sigma}(\varphi)$, then
\[|a_2|\leq \min{\set{\sqrt\frac{ B_1^2+B_1+|B_2-B_1|}{6}, \frac{B_1}{2}}}\] and \[
|a_3|\leq \min{\set{\frac{ B_1^2+B_1+|B_2-B_1|}{6}, \frac{B_1(3B_1+2)}{12}}}.\]
\end{theorem}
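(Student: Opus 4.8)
The plan is to mirror the structure of the proof of Theorem~\ref{th2}, since the bi-convex class $K_{\sigma}(\varphi)$ differs from $\mathcal{S^*}_{\sigma}(\varphi)$ only in that the subordination governs $1+zf''(z)/f'(z)$ rather than $zf'(z)/f(z)$. First I would introduce analytic self-maps $r,s$ of $\mathbb{D}$ with $r(0)=s(0)=0$ such that
\[
1+\frac{zf''(z)}{f'(z)}=\varphi(r(z))\quad\text{and}\quad 1+\frac{wF''(w)}{F'(w)}=\varphi(s(w)),
\]
and define $p,q$ as in (\ref{pq}) so that the right-hand sides expand exactly as in (\ref{p5}) and (\ref{p6}). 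The computational input that changes is the left-hand side: using $f(z)=z+a_2z^2+a_3z^3+\cdots$, one expands $1+zf''(z)/f'(z)=1+2a_2z+(6a_3-4a_2^2)z^2+\cdots$, and similarly for $F$ using the coefficients from (\ref{ei}). Matching powers of $z$ and $w$ against (\ref{p5}) and (\ref{p6}) will yield four relations analogous to (\ref{eq11.1})--(\ref{eq16.1}), the key first-order ones being $2a_2=\tfrac{1}{2}B_1p_1$ and $-2a_2=\tfrac{1}{2}B_1q_1$.

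From the first-order relations I expect $p_1=-q_1$ and $16a_2^2=(p_1^2+q_1^2)B_1^2$, the convex analogue of (\ref{eq18.10}), the extra factor of $2$ in the denominators tracing back to the coefficient $2a_2$ instead of $a_2$. Adding the two second-order relations and eliminating $p_1^2+q_1^2$ via the first-order data should produce an identity of the shape $c\,a_2^2=B_1^3(p_2+q_2)$ for an explicit constant $c$ involving $B_1^2-B_2+B_1$, playing the role of (\ref{eq19.10}); bounding $|p_2+q_2|\le 4$ then gives the term $\sqrt{(B_1^2+B_1+|B_2-B_1|)/6}$. The competing bound $B_1/2$ comes directly from $|a_2|=\tfrac{1}{4}B_1|p_1|$ together with $|p_1|\le2$. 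Taking the minimum of these two yields the asserted estimate for $|a_2|$.

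For $a_3$ I would subtract the two second-order equations, as in the passage from (\ref{eq12.1}) and (\ref{eq16.1}) to (\ref{eq19.2}), to isolate $a_3$ in terms of $a_2^2$ and $q_2-p_2$; substituting the expression for $a_2^2$ gives one representation bounded by $(B_1^2+B_1+|B_2-B_1|)/6$. A second representation, analogous to (\ref{eq19.33})--(\ref{eq19.3}), writes $a_3$ as a combination of $p_2$, $q_2$ and $p_1^2$; applying the Keogh--Merkes inequality $|p_2-vp_1^2|\le 2\max\{1;|2v-1|\}$ together with $|q_2|\le2$ produces the bound involving $\max\{1;|(B_1-4B_2)/(3B_1)|\}$, which should simplify to $B_1(3B_1+2)/12$ in the relevant range. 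Taking the minimum finishes the estimate for $|a_3|$.

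The main obstacle I anticipate is purely computational: correctly expanding $1+zf''(z)/f'(z)$ and its inverse-function counterpart to second order and keeping track of the numerical constants, since the convex normalization introduces factors of $2$ and $6$ that must be propagated consistently through the elimination. No genuinely new idea beyond the method of Theorem~\ref{th2} is needed; the subtlety lies only in carrying the altered constants through so that the two candidate bounds in each minimum come out exactly as stated.
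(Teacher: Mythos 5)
Your overall framework coincides with the paper's: introduce $r,s$ with $1+zf''/f'=\varphi(r)$, $1+wF''/F'=\varphi(s)$, pass to $p,q$ as in (\ref{pq}), and match coefficients to get $2a_2=\tfrac12 B_1p_1$, $-2a_2=\tfrac12 B_1q_1$ and two second-order relations; the bound $|a_2|\le B_1/2$ and the first $|a_3|$ bound are then obtained exactly as you describe. However, you have misattributed the algebraic combinations that produce the other two stated bounds, and the mechanisms you propose would yield different expressions. For $|a_2|\le\sqrt{(B_1^2+B_1+|B_2-B_1|)/6}$ you propose eliminating $p_1^2+q_1^2$ via the first-order data to reach $c\,a_2^2=B_1^3(p_2+q_2)$; that elimination (the analogue of (\ref{eq19.10})) gives $8(B_1^2+2B_1-2B_2)a_2^2=B_1^3(p_2+q_2)$ and hence a bound of the shape $B_1\sqrt{B_1}/\sqrt{2|B_1^2+2B_1-2B_2|}$, a different function of $B_1,B_2$ that is not one of the two terms in the stated minimum. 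The stated term comes from \emph{not} eliminating $p_1^2+q_1^2$: add the two second-order relations, replace only the cross term $2B_1p_1a_2$ by $B_1^2p_1^2/2$ (using $a_2=B_1p_1/4$) to obtain $48a_2^2=2B_1^2p_1^2+2B_1(p_2+q_2)+(B_2-B_1)(p_1^2+q_1^2)$, and then bound termwise with $|p_i|,|q_i|\le2$. (Also, your first-order identity should read $32a_2^2=(p_1^2+q_1^2)B_1^2$, not $16a_2^2$.)

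Similarly, $|a_3|\le B_1(3B_1+2)/12$ cannot come from the Keogh--Merkes inequality: any bound of the form $\tfrac14\left(B_1+3B_1\max\{1;|(B_1-4B_2)/(3B_1)|\}\right)$ depends on $B_2$, whereas $B_1(3B_1+2)/12$ does not, so no ``simplification in the relevant range'' is possible. The paper instead takes the subtracted relation $-12a_3=-12a_2^2+\tfrac12 B_1(q_2-p_2)$ and substitutes $a_2^2=B_1^2(p_1-q_1)^2/64$ from the first-order data, giving $-12a_3=\tfrac12B_1(q_2-p_2)-\tfrac{3}{16}B_1^2(p_1-q_1)^2$; bounding $|q_2-p_2|\le4$ and $|p_1-q_1|\le4$ yields $12|a_3|\le 2B_1+3B_1^2$. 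So the skeleton of your argument is the right one, but two of the four bounds require different eliminations than those you describe, and carrying out your plan as written would not reproduce the theorem's statement.
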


\begin{proof} Since $f\in K_{\sigma}(\varphi)$,  there are analytic functions $r, s:\mathbb{D}\rightarrow\mathbb{D}$,  with $r(0)=0=s(0)$,  satisfying
\begin{equation}\label{p1.12}1+\frac{zf''(z)}{f'(z)}=\varphi(r(z))\;\text {and}\; 1+\frac{wF''(w)}{F'(w)} =\varphi(s(z)).\end{equation}
Let $p$ and $q$ be defined as in (\ref{pq}),
then it is clear from (\ref{p1.12}), (\ref{p2}) and (\ref{p3}) that
\begin{equation}\label{p4.12}1+\frac{zf''(z)}{f'(z)}=\varphi\left(\frac{p(z)-1}{p(z)+1}\right)\;\;\text {and}\;\; 1+\frac{wF''(w)}{F'(w)}=\varphi\left(\frac{q(z)-1}{q(z)+1}\right).\end{equation}
It follows from (\ref{p4.12}), (\ref{p5}) and (\ref{p6}) that
\begin{equation}\label{eq11.12}
 2a_2=\frac{1}{2}B_1p_1,
\end{equation}
\begin{equation}\label{eq12.12}
 6a_3 =B_1p_1a_2+\frac{1}{2}B_1\left(p_2-\frac{1}{2}p_1^2\right)+\frac{1}{4}B_2p_1^2,
\end{equation}
\begin{equation}\label{eq15.12}
 -2a_2=\frac{1}{2}B_1q_1
\end{equation}
and
\begin{equation}\label{eq16.12}
 6(2a_2^2-a_3)=-B_1q_1a_2+\frac{1}{2}B_1\left(q_2-\frac{1}{2}q_1^2\right)+\frac{1}{4}B_2q_1^2.
\end{equation}
Now (\ref{eq11.12}) and (\ref{eq15.12}) yield
\begin{equation}\label{eq17.12}
 p_1=-q_1
\end{equation}
and
\begin{equation}\label{eq18.12}
 4a_2=\frac{B_1(p_1-q_1)}{2}.
\end{equation}
From (\ref{eq12.12}), (\ref{eq16.12}), (\ref{eq17.12}) and (\ref{eq11.12}), it follows that
\begin{equation}\label{eq19.12}
 48a_2^2=2B_1^2p_1^2+2B_1(p_2+q_2)+(B_2-B_1)(p_1^2+q_1^2).
\end{equation}
In view of $|p_i|\leq2$ and $|q_i|\leq2$ together with (\ref{eq18.12}) and (\ref{eq19.12}) yield the desired estimate on $a_2$ as asserted in the theorem.
In order to find $a_3$, we subtract (\ref{eq12.12}) from (\ref{eq16.12}) and use (\ref{eq17.12}) to obtain
\begin{equation}\label{eq19.22}
 -12a_3=-12a_2^2+\frac{B_1(q_2-p_2)}{2}.
\end{equation}
 Now a computation using (\ref{eq19.12}) and (\ref{eq19.22}) leads to
\begin{equation}\label{eq19.32}
 -48a_3=2B_1^2p_1^2-4B_2p_2+(B_1-B_2)(p_1^2+q_1^2).
\end{equation}
From (\ref{eq18.12}) and (\ref{eq19.22}), it follows that
\begin{equation}\label{eq19.42}
-12a_3=\frac{B_1(q_2-p_2)}{2}-\frac{3(p_1-q_1)^2B_1^2}{16}.
\end{equation}
 Now (\ref{eq19.32}) and (\ref{eq19.42}) yield the desired estimate on $a_3$ as asserted in the theorem.
\end{proof}
\begin{remark}If $f\in K_\sigma(\beta)\;\;(0\leq \beta<1),$ then theorem~\ref{th3} gives
\[|a_2|\leq \min{\set{\sqrt\frac{ (1-\beta)(3-2\beta)}{3}, 1-\beta}}=1-\beta\] and \[
|a_3|\leq \min{\set{\frac{ (1-\beta)(3-2\beta)}{3}, \frac{(1-\beta)(4-3\beta)}{3}}}=\frac{(1-\beta)(3-2\beta)}{3},\] which improves the Brannan and Taha's~\cite[Theorem 4.1]{branan0} estimates $|a_2|\leq \sqrt{1-\beta} $ and $|a_3|\leq 1-\beta$ for functions $f\in K_\sigma(\beta)$.
\end{remark}
\begin{theorem}\label{th12} Let $f\in\sigma$ be given by (\ref{eq1}). If $f\in\mathcal{K}(\varphi)$ and $F\in\mathcal{R}(\varphi)$, then
\[|a_2|\leq \sqrt \frac{3[B_1+|B_2-B_1|]}{8}\] and \[|a_3|\leq\frac{5[B_1+|B_2-B_1|]}{12}.\]
\end{theorem}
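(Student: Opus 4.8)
The plan is to mirror the coefficient-comparison technique used in the proofs of Theorems \ref{th2} and \ref{th3}, but now with two \emph{different} subordination hypotheses: a convexity condition on $f$ and a bounded-turning condition on its inverse $F$. Since $f\in\mathcal{K}(\varphi)$ and $F\in\mathcal{R}(\varphi)$, there exist analytic self-maps $r,s$ of $\mathbb{D}$ with $r(0)=s(0)=0$ such that $1+zf''(z)/f'(z)=\varphi(r(z))$ and $F'(w)=\varphi(s(w))$. I would introduce the Carath\'eodory functions $p,q$ exactly as in (\ref{pq}), so that the two right-hand sides admit the expansions already recorded in (\ref{p5}) and (\ref{p6}).

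The first step is to expand the two left-hand sides. Using (\ref{eq1}) one finds $1+zf''(z)/f'(z)=1+2a_2z+(6a_3-4a_2^2)z^2+\cdots$, while differentiating the inverse series (\ref{ei}) gives $F'(w)=1-2a_2w+3(2a_2^2-a_3)w^2+\cdots$. Matching these against (\ref{p5}) and (\ref{p6}) produces four relations: the first-order pair $2a_2=\tfrac12B_1p_1$ and $-2a_2=\tfrac12B_1q_1$ (which force $p_1=-q_1$, hence $p_1^2=q_1^2$), together with the second-order pair $6a_3-4a_2^2=\tfrac12B_1p_2+\tfrac14(B_2-B_1)p_1^2$ and $6a_2^2-3a_3=\tfrac12B_1q_2+\tfrac14(B_2-B_1)q_1^2$.

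To estimate $a_2$ I would eliminate $a_3$ by adding twice the second relation to the first; after invoking $p_1^2=q_1^2$ this collapses to $8a_2^2=\tfrac12B_1p_2+B_1q_2+\tfrac34(B_2-B_1)p_1^2$. The Carath\'eodory bounds $|p_2|,|q_2|\le 2$ and $|p_1|\le 2$ then give $8|a_2|^2\le 3B_1+3|B_2-B_1|$, which is exactly the asserted bound $|a_2|\le\sqrt{3[B_1+|B_2-B_1|]/8}$. For $a_3$ I would instead eliminate $a_2^2$ by forming three times the first relation plus twice the second, obtaining $12a_3=\tfrac32B_1p_2+B_1q_2+\tfrac54(B_2-B_1)p_1^2$; the same coefficient bounds yield $12|a_3|\le 5B_1+5|B_2-B_1|$, as claimed.

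The only genuine subtlety is the bookkeeping forced by the asymmetry of the two hypotheses: the coefficient of $a_3$ is $6$ on the $f$-side but only $3$ on the $F$-side, so the elimination multipliers (namely $1,2$ for $a_2$ and $3,2$ for $a_3$) differ from the symmetric combinations used in Theorem \ref{th3}. Once these multipliers are chosen correctly the two estimates fall out directly from $|p_i|\le 2$ and the triangle inequality, so no sharper device such as the Keogh--Merkes inequality employed in Theorem \ref{th2} is needed here.
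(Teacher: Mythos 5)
Your proposal is correct and follows essentially the same route as the paper: the same four coefficient identities, the same elimination multipliers ($1,2$ for $a_2^2$ and $3,2$ for $a_3$), and the same final application of $|p_i|,|q_i|\le 2$. The only cosmetic difference is that you use $p_1^2=q_1^2$ to merge the two $(B_2-B_1)$ terms before estimating, while the paper keeps $p_1^2$ and $q_1^2$ separate in (\ref{eq7}) and (\ref{eq8}); both yield identical bounds.
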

\begin{proof} Since $f\in\mathcal{K}(\varphi)$ and $F\in\mathcal{R}(\varphi)$, there exist two analytic functions $r, s:\mathbb{D}\rightarrow\mathbb{D}$,  with $r(0)=0=s(0)$,  such that
\begin{equation}\label{p11.1}1+\frac{zf''(z)}{f'(z)}=\varphi(r(z))\;\text {and}\; F'(w)=\varphi(s(z)).\end{equation}
Let the functions $p$ and $q$ are defined by (\ref{pq}). It is clear that $p$ and $q$ are analytic in $\mathbb{D}$
and $p(0)=1=q(0)$. Also $p$ and $q$  have positive real part in $\mathbb{D}$, and
hence $|p_i|\leq2$ and  $|q_i|\leq2$.
Proceeding as in the proof of Theorem~\ref{th1} it follow from (\ref{p11.1}), (\ref{p5}) and (\ref{p6}) that
$$ 2a_2=\frac{1}{2}B_1p_1,$$
\begin{equation}\label{eq3}
 6a_3-4a_2^2 =\frac{1}{2}B_1\Big(p_2-\frac{1}{2}p_1^2\Big)+\frac{1}{4}B_2p_1^2,
\end{equation}
$$-2 a_2=\frac{1}{2}B_1q_1 $$
and
\begin{equation}\label{eq5}
 3(2a_2^2-a_3)=\frac{1}{2}B_1\Big(q_2-\frac{1}{2}q_1^2\Big)+\frac{1}{4}B_2q_1^2.
\end{equation}
A computation using (\ref{eq3}) and ({\ref{eq5}}), leads to
\begin{equation}\label{eq7}
  a_2^2=\frac{2(p_2+2q_2)B_1+(p_1^2+2q_1^2)(B_2-B_1)}{32}.
\end{equation}
and
\begin{equation}\label{eq8} a_3=\frac{2(3p_2+2q_2)B_1+(3p_1^2+2q_1^2)(B_2-B_1)}{48}.\end{equation}
Now the desired estimates on $a_2$ and $a_3$, follow  from (\ref{eq7}) and (\ref{eq8}) respectively.\end{proof}
\begin{remark} If $f\in\mathcal{K}(\beta)$ and $F\in\mathcal{R}(\beta)$, then from Theorem~\ref{th12} we see that
\[ |a_2|\leq \sqrt {3(1-\beta)}/2\;\;{\text{and}}\;\; |a_3|\leq 5(1-\beta)/6.\]
 In particular if $f\in\mathcal{K}$ and $F\in\mathcal{R}$, then $|a_2|\leq \sqrt 3/2\approx 0.867\;\;{\text{and}}\;\; |a_3|\leq 5/6\approx0.833$.
\end{remark}

\begin{theorem}\label{th21} Let $f\in\sigma$ be given by (\ref{eq1}). If $f\in\mathcal{S}^*(\varphi)$ and $F\in\mathcal{R}(\varphi),$ then
$$|a_2|\leq \frac{\sqrt {5[B_1+|B_2-B_1|]}}{3},\;\;{\text{and} }\;\; |a_3|\leq\frac{7[B_1+|B_2-B_1|]}{9}.$$
\end{theorem}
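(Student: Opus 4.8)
The plan is to follow the template established in the proofs of Theorems \ref{th1}--\ref{th12}, adapting it to the mixed setting in which $f$ is starlike with respect to $\varphi$ while its inverse $F$ lies in $\mathcal{R}(\varphi)$. First I would record the two subordinations as $zf'(z)/f(z)=\varphi(r(z))$ and $F'(w)=\varphi(s(w))$ for Schwarz functions $r,s$ with $r(0)=0=s(0)$, introduce the Carath\'eodory functions $p,q$ exactly as in (\ref{pq}), and substitute (\ref{p2})--(\ref{p3}) so that both right-hand sides are rewritten via the expansions (\ref{p5}) and (\ref{p6}).

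Next I would expand the left-hand sides. The starlike condition gives $zf'(z)/f(z)=1+a_2 z+(2a_3-a_2^2)z^2+\cdots$, and for $F\in\mathcal{R}(\varphi)$ one differentiates the inverse series (\ref{ei}) to obtain $F'(w)=1-2a_2 w+3(2a_2^2-a_3)w^2+\cdots$. Matching coefficients against (\ref{p5}) and (\ref{p6}) produces four relations: the first-order ones $a_2=\tfrac12 B_1 p_1$ and $-2a_2=\tfrac12 B_1 q_1$ (which are consistent and force $q_1=-2p_1$), together with the second-order ones
\[2a_3-a_2^2=\tfrac12 B_1\big(p_2-\tfrac12 p_1^2\big)+\tfrac14 B_2 p_1^2,\qquad 3(2a_2^2-a_3)=\tfrac12 B_1\big(q_2-\tfrac12 q_1^2\big)+\tfrac14 B_2 q_1^2.\]

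The heart of the argument is choosing the right linear combination of these last two equations to isolate $a_2^2$ and $a_3$ cleanly. Taking three times the first plus twice the second eliminates $a_3$ and yields $9a_2^2=\tfrac32 B_1 p_2+B_1 q_2+\tfrac34(B_2-B_1)p_1^2+\tfrac12(B_2-B_1)q_1^2$, while six times the first plus the second eliminates $a_2^2$ and yields $9a_3=3B_1 p_2+\tfrac12 B_1 q_2+\tfrac32(B_2-B_1)p_1^2+\tfrac14(B_2-B_1)q_1^2$. Applying $|p_i|\le 2$ and $|q_i|\le 2$ term by term then gives $9|a_2|^2\le 5(B_1+|B_2-B_1|)$ and $9|a_3|\le 7(B_1+|B_2-B_1|)$, which are precisely the asserted estimates.

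I expect the only real obstacle to be bookkeeping: getting the asymmetric factor of $-2$ in the first-order coefficient of $F'$ correct (this is exactly what distinguishes the present theorem from Theorem \ref{th12}, where $f$ is convex and the first-order coefficients match up as $p_1=-q_1$), and then selecting the multipliers $(3,2)$ and $(6,1)$ so that the coefficients of $a_2^2$ and of $a_3$ collapse to the convenient common value $9$. Unlike the estimate for $a_3$ in Theorem \ref{th2}, no appeal to the Keogh--Merkes inequality is needed here, since each $p_i$ and $q_i$ is estimated directly by $2$.
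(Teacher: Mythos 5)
Your proposal is correct and follows essentially the same route as the paper: the same four coefficient relations, the same linear combinations (which reproduce the paper's identities $36a_2^2=2(3p_2+2q_2)B_1+(3p_1^2+2q_1^2)(B_2-B_1)$ and $36a_3=2(6p_2+q_2)B_1+(6p_1^2+q_1^2)(B_2-B_1)$), and the same direct application of $|p_i|\leq 2$, $|q_i|\leq 2$. No discrepancies.
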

\begin{proof} Since
   $f\in\mathcal{S}^*(\varphi)$ and $F\in\mathcal{R}(\varphi),$ there exist two analytic functions $r, s:\mathbb{D}\rightarrow\mathbb{D}$,  with $r(0)=0=s(0)$,  such that
\begin{equation}\label{tp1}\frac{zf'(z)}{f(z)}=\varphi(r(z))\;\text {and}\; F'(w)=\varphi(s(z)).\end{equation}
Let the functions $p$ and $q$ be defined as in (\ref{pq}). Then
 {\small\begin{equation}\label{tp4}\frac{zf'(z)}{f(z)}=\varphi\left(\frac{p(z)-1}{p(z)+1}\right)\;\;\text {and}\;\; F'(w)=\varphi\left(\frac{q(w)-1}{q(w)+1}\right).\end{equation}}
 It follow from (\ref{tp4}), (\ref{p5}) and (\ref{p6}) that
  $$a_2=\frac{1}{2}B_1p_1,$$
\begin{equation}\label{teq3}
 2a_3-a_2^2 =\frac{1}{2}B_1\Big(p_2-\frac{1}{2}p_1^2\Big)+\frac{1}{4}B_2p_1^2,
\end{equation}
$$ -2a_2=\frac{1}{2}B_1q_1, $$
\begin{equation}\label{teq5}
 3(2a_2^2-a_3)=\frac{1}{2}B_1\Big(q_2-\frac{1}{2}q_1^2\Big)+\frac{1}{4}B_2q_1^2.
\end{equation}
A computation using (\ref{teq3}) and ({\ref{teq5}}) leads to
\begin{equation}\label{teq7}
  a_2^2=\frac{2(3p_2+2q_2)B_1+(3p_1^2+2q_1^2)(B_2-B_1)}{36}
\end{equation} and
\begin{equation}\label{teq8} a_3=\frac{2(6p_2+q_2)B_1+(6p_1^2+q_1^2)(B_2-B_1)}{36}.\end{equation}
Now the bounds for $a_2$ and $a_3$ are obtained from (\ref{teq7}) and (\ref{teq8}) respectively using the fact that $|p_i|\leq 2$ and $|q_i|\leq 2$.
\end{proof}
\begin{remark}
 If $f\in \mathcal{S}^*(\beta)$ and $F\in\mathcal{R}(\beta)$, then from Theorem~\ref{th21} it is easy to see that \[ |a_2|\leq \sqrt{10(1-\beta)}/3 \;\;{\text{and}}\;\;|a_3|\leq 14(1-\beta)/9.\] In particular if $f\in \mathcal{S}^*$ and $F\in\mathcal{R}$, then $|a_2|\leq \sqrt 10/3\approx1.054 \;\;{\text{and}}\;\;|a_3|\leq14/9\approx 1.56.$
\end{remark}
\begin{theorem}\label{th5} Let $f\in\sigma$ given by (\ref{eq1}). If $f\in\mathcal{S}^*(\varphi)$ and $F\in\mathcal{K}(\varphi),$ then
\[|a_2|\leq \sqrt \frac{B_1+|B_2-B_1|}{2}\] and
 \[|a_3|\leq\frac{B_1+|B_2-B_1|}{2}.\]
\end{theorem}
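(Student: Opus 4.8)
The plan is to run the same machinery as in Theorems~\ref{th12} and~\ref{th21}: turn each subordination into coefficient equations, solve the resulting system for $a_2^2$ and $a_3$, and close with the Carath\'eodory bounds $|p_i|\le 2$, $|q_i|\le 2$. Concretely, I would write the two defining relations as $zf'(z)/f(z)=\varphi(r(z))$ and $1+wF''(w)/F'(w)=\varphi(s(w))$ for Schwarz functions $r,s$, introduce $p,q$ through~(\ref{pq}), and substitute the expansions~(\ref{p5}) and~(\ref{p6}). Expanding the left-hand sides to order two gives $zf'(z)/f(z)=1+a_2z+(2a_3-a_2^2)z^2+\cdots$, exactly the series used in the proof of Theorem~\ref{th21}, and, via the inverse expansion~(\ref{ei}), $1+wF''(w)/F'(w)=1-2a_2w+(8a_2^2-6a_3)w^2+\cdots$, whose $w^2$-coefficient is the one already computed for a convex inverse in the proof of Theorem~\ref{th3}. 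Matching coefficients yields
\begin{align*}
a_2 &=\tfrac12 B_1 p_1, & 2a_3-a_2^2 &=\tfrac12 B_1\big(p_2-\tfrac12 p_1^2\big)+\tfrac14 B_2 p_1^2,\\
-2a_2 &=\tfrac12 B_1 q_1, & 8a_2^2-6a_3 &=\tfrac12 B_1\big(q_2-\tfrac12 q_1^2\big)+\tfrac14 B_2 q_1^2.
\end{align*}

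The two linear relations force the compatibility $q_1=-2p_1$, which is the only ingredient here beyond routine coefficient comparison. To isolate $a_2^2$ I would eliminate $a_3$ from the two second-order relations. Unlike the bi-starlike and bi-convex cases, the $a_3$-coefficients are now asymmetric ($2$ against $-6$), so simple addition will not cancel $a_3$; the right move is to add three times the relation coming from $f$ to the relation coming from $F$, which removes $a_3$ and expresses $a_2^2$ as a linear combination of $p_2,q_2,p_1^2,q_1^2$ of the same shape as~(\ref{eq7}) and~(\ref{teq7}). Substituting $q_1=-2p_1$ (so $q_1^2=4p_1^2$) merges the two square terms into a single multiple of $(B_2-B_1)p_1^2$, and then $|p_1|\le2$, $|p_2|\le2$, $|q_2|\le2$ deliver the estimate on $|a_2|$. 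For $a_3$ I would instead eliminate $a_2^2$ between the same two relations, obtaining $a_3$ as an analogous linear combination, and bound it the same way; the split into $B_1$ and $|B_2-B_1|$ arises because the extremal contribution of the $(B_2-B_1)p_1^2$ term separates cleanly from that of the $p_2,q_2$ terms.

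The main obstacle I anticipate is not conceptual but arithmetic: because the starlike hypothesis on $f$ and the convex hypothesis on $F$ enter asymmetrically, the weighting in the elimination and the reduction via $q_1=-2p_1$ must be tracked exactly, and a slip by a constant factor is easy. To guard against this I would verify the resulting rational constants on the test case $\varphi(z)=(1+z)/(1-z)$, where $B_1=B_2=2$ and the bound should collapse to $|a_2|\le1$ and $|a_3|\le1$; any discrepancy in combining the two quadratic relations would surface precisely there.
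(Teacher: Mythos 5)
Your setup is exactly the paper's: the same Schwarz-function machinery, the same four coefficient equations, and the same elimination-then-$|p_i|\le2$ endgame. The one substantive divergence is the $z^2$-coefficient of $zf'(z)/f(z)$: you write $2a_3-a_2^2$, which is the correct Taylor coefficient and is also what the paper itself uses for the identical hypothesis in the proof of Theorem~\ref{th21} (see (\ref{teq3})). The paper's proof of Theorem~\ref{th5}, however, writes $3a_3-a_2^2$ in (\ref{te2}), and the displayed formulas (\ref{te7})--(\ref{te8}) --- and hence the constant $\frac12$ in the statement --- follow only from that version: with $3a_3-a_2^2=P$ and $8a_2^2-6a_3=Q$ one gets $6a_2^2=2P+Q$ and $18a_3=8P+Q$, which produce exactly (\ref{te7}), (\ref{te8}) and the bound $(B_1+|B_2-B_1|)/2$.

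The difficulty is that your (correct) system does not close to the stated bounds, so the final step of the proposal fails. With $2a_3-a_2^2=P$ and $8a_2^2-6a_3=Q$, your elimination (three times the first relation plus the second) gives $5a_2^2=3P+Q$, i.e.
\[5a_2^2=\frac{3}{2}B_1p_2+\frac{1}{2}B_1q_2+\frac{3}{4}(B_2-B_1)p_1^2+\frac{1}{4}(B_2-B_1)q_1^2,\]
and the bounds $|p_i|\le2$, $|q_i|\le2$ yield only $|a_2|^2\le\frac{4}{5}\left(B_1+|B_2-B_1|\right)$, which is weaker than the claimed $\frac{1}{2}\left(B_1+|B_2-B_1|\right)$; invoking $q_1=-2p_1$ (hence $|p_1|\le1$) improves this only to $\frac{4}{5}B_1+\frac{7}{20}|B_2-B_1|$. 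Similarly, eliminating $a_2^2$ gives $10a_3=8P+Q$ and $|a_3|\le\frac{9}{10}\left(B_1+|B_2-B_1|\right)$. Your own proposed sanity check at $B_1=B_2=2$ exposes the problem: you would obtain $|a_2|\le\sqrt{8/5}\approx1.26$ rather than $1$. So the assertion that the Carath\'eodory bounds ``deliver the estimate'' is the genuine gap: the constants of Theorem~\ref{th5} are not reachable from the correct coefficient equations by this route, and the paper's own derivation reaches them only through the coefficient $3a_3-a_2^2$ in (\ref{te2}), which is inconsistent with the expansion used elsewhere in the paper.
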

\begin{proof}
  Assuming $f\in\mathcal{S}^*(\varphi)$ and $F\in\mathcal{K}(\varphi)$ and proceeding in the similar way as in the proof of Theorem~\ref{th12}, it is easy to see that
  $$ a_2=\frac{1}{2}B_1p_1,$$
\begin{equation}\label{te2}
 3a_3-a_2^2 =\frac{1}{2}B_1\Big(p_2-\frac{1}{2}p_1^2\Big)+\frac{1}{4}B_2p_1^2,
\end{equation}
$$ -2a_2=\frac{1}{2}B_1q_1,$$
\begin{equation}\label{te4}
 8a_2^2-6a_3=\frac{1}{2}B_1\Big(q_2-\frac{1}{2}q_1^2\Big)+\frac{1}{4}B_2q_1^2.
\end{equation}
A computation using (\ref{te2}) and (\ref{te4}) leads to
\begin{equation}\label{te7}
 a_2^2=\frac{2(2p_2+q_2)B_1+(2p_1^2+q_1^2)(B_2-B_1)}{24}
\end{equation} and
\begin{equation}\label{te8}
 a_3=\frac{2(8p_2+q_2)B_1+(8p_1^2+q_1^2)(B_2-B_1)}{72}.
\end{equation}
Now using the result $|p_i|\leq2$ and $|q_i|\leq2$, the estimates on $a_2$ and $a_3$ follow from (\ref{te7}) and (\ref{te8}) respectively.
\end{proof}
\begin{remark} Let $f\in\mathcal{S}^*(\beta)$ and $F\in\mathcal{K}(\beta),\;0\leq\beta<1$. Then from Theorem~\ref{th5}, it is easy to see that \[ |a_2|\leq \sqrt {1-\beta}\;\; {\text{and}}\;\; |a_3|\leq 1-\beta.\] In particular if $f\in\mathcal{S}^*$ and $F\in\mathcal{K},$ then $|a_2|\leq 1\;\; {\text{and}}\;\; |a_3|\leq 1$.
\end{remark}

\noindent{\bf Acknowledgements.} The research is supported by a grant from University of Delhi.

\end{document}